\chardef\bslash=`\\ % p. 424, TeXbook
\newcommand{\Sing}{\operatorname{Sing}}
\newtheorem{theorem}{Theorem}[section] % 1st argument is your name for it
\newtheorem{lemma}[theorem]{Lemma}     % 2nd argument is what is printed
\newtheorem{corollary}[theorem]{Corollary}
\newtheorem*{hausdorff-limit}{Theorem E}
\newcommand{\eval}[2][\right]{\relax
  \ifx#1\right\relax \left.\fi#2#1\rvert}
\title[Homoclinic classes for sectional-hyperbolic sets]{Homoclinic classes for sectional-hyperbolic sets}
\author{A. Arbieto, C.A. Morales, A.M. Lopez B.}
\address{Instituto de Matem\'atica, Universidade Federal do Rio de Janeiro, P. O. Box 68530, 21945-970 Rio
de Janeiro, Brazil.}
\email{arbieto@im.ufrj.br, barragan@im.ufrj.br, morales@impa.br}
\thanks{Partially supported by CNPq, FAPERJ and PRONEX/DS from Brazil.}
\subjclass[2010]{Primary: 37D20; Secondary: 37C70}
\keywords{Sectional-hyperbolic set, Homoclinic class, Flow.}
\begin{document}

\begin{abstract}
We prove that every sectional-hyperbolic Lyapunov stable set contains a nontrivial homoclinic class.
\end{abstract}

%\part{Use this type of header for very long papers only}
% use lowercase except for proper names
\maketitle
\section{Introduction} % use lowercase except for proper names
\label{intro}

\noindent
A well-known problem in dynamics is to determinate when a given system has periodic or homoclinic orbits.
This problem is completely solved for hyperbolic sets, namely,
every nontrivial isolated hyperbolic set contains homoclinic (and hence infinitely many periodic) orbits.
It is natural to extend this solution beyond
hyperbolicity.
For instance we can consider the {\em singular-hyperbolic sets},
introduced in \cite{mpp} to put together both hyperbolic systems and certain robustly transitive sets with singularities
in dimension three like the {\em geometric Lorenz attractors}
\cite{abs},\cite{gw}.
It is then tempting to say that every nontrivial isolated sectional-hyperbolic set contains homoclinic orbits,
but this is not true in general \cite{m}.
However,
Bautista and the second author proved that if a
singular-hyperbolic set in dimension three is attracting, then it must contain
a periodic orbit \cite{bm}.
This was obtained in parallel with the claim by Arroyo and
Pujals \cite{apu} that 
every singular-hyperbolic attractor in dimension three is a homoclinic class (see also \cite{ap}).
Afterward Nakai \cite{n} extended \cite{bm}
from attracting to Lyapunov stable sets
and Reis \cite{r} gave generic conditions under which
a singular-hyperbolic attracting set in dimension three exhibits infinitely many periodic orbits.
In higher dimensions, Metzger and the second author \cite{memo}
introduced the notion of {\em sectional-hyperbolic sets}
which reduces to singular-hyperbolicity in dimension three.
In this context,
the third author \cite{l} was able to extend
the existence of periodic orbits to all sectional-hyperbolic attracting sets. In this note we go further and prove that 
every sectional-hyperbolic Lyapunov stable set has a nontrivial
homoclinic class. Therefore, all such sets display homoclinic
(and hence infinitely many periodic) orbits.
Let us state our result in a precise way.

By {\em abus de langage}, we call {\em flow} any $C^1$ vector field $X$ with
induced flow $X_t$ of a compact connected manifold
$M$ endowed with a Riemannian structure $\|\cdot\|$.
We say that $\Lambda\subset M$ is
{\em invariant} if $X_t(\Lambda)=\Lambda$ for all $t\in\mathbb{R}$.
An invariant set $\Lambda$ is {\em Lyapunov stable} if
for every neighbourhood $U$ of $\Lambda$ there is a neighbourhood $V\subset U$ of $\Lambda$ such that
$X_t(V)\subset U$ for all $t\geq0$. Similar definition holds for maps.
The set of singularities (i.e. zeroes of $X$) is denoted by $\Sing(X)$.
We say that $\sigma\in\Sing(X)$ is hyperbolic if
the derivative $DX(\sigma)$ has no purely imaginary eigenvalues.
A point $x$ is {\em periodic} if there is a minimal $t=t_x>0$ such that
$X_t(x)=x$. We say that a periodic point $x$ is {\em hyperbolic}
if the eigenvalues of the derivative $DX_{t_x}(x)$ not corresponding to the
flow direction are all different from $1$ in modulus.
In case there are eigenvalues of modulus less and bigger than $1$
we say that the hyperbolic periodic point is a {\em saddle}. 

As is well known \cite{hps}, through any periodic saddle $x$ it passes a pair of invariant manifolds, the so-called strong stable and unstable manifolds
$W^{ss}(x)$ and $W^{uu}(x)$, tangent at $x$ to the eigenspaces corresponding to the eigenvalue of modulus less and bigger than $1$ respectively.
Saturating them with the flow we obtain the stable and unstable manifolds $W^s(x)$ and $W^u(x)$ respectively.
A homoclinic orbit associated to $x$ is the orbit of a point $q$ where these last manifolds meet. If, additionally, $\dim(T_qW^s(x)\cap T_qW^u(x))=1$, then we
say that the homoclinic orbit is transversal.
A {\em homoclinic class} is the closure of the transversal homoclinic orbits of a given periodic saddle. It is {\em nontrivial}
if it does not reduce to a single periodic orbit.

We say that a compact invariant set {\em $\Lambda$
has a dominated splitting with respect to the tangent flow} if there is a continuous splitting
$T_\Lambda M = E\oplus F$ into $DX_t$-invariant subbundles $E,F$ such that $DX_t|_E$ dominates $DX_t|_F$, namely,
there are positive constants $K,\lambda$
satisfying
$$
\|DX_t(p)|_{E_p}\|\cdot\|DX_{-t}(X_t(p))|_{F_{X_t(p)}}\|\leq Ke^{-\lambda t},
\quad\quad\forall p\in \Lambda, t\geq0.
$$
We say that the splitting $T_\Lambda M = E\oplus F$ is a {\em sectional-hyperbolic splitting} if $E$ is {\em contracting}, i.e.,
$$
\|DX_t(p)|_{E_p}\|\leq Ke^{-\lambda t},
\quad\quad\forall p\in \Lambda, t\geq0,
$$
and $F$ is {\em sectional expanding}, i.e., $\dim(F)\geq2$ and
$$
|\det DX_t(p)|_L|\geq Ke^{\lambda t},
$$
for every $p\in \Lambda$, $t\geq0$ and every two-dimensional subspace
$L\subset F_p$.

A compact invariant set is {\em sectional-hyperbolic}
if its singularities are all hyperbolic and if it exhibits
a sectional-hyperbolic splitting.

With these definitions we can state our main result.

\begin{theorem}
\label{ThA}
Every sectional-hyperbolic Lyapunov stable set
contains a nontrivial homoclinic class.
\end{theorem}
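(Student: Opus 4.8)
The plan is to exhibit a periodic saddle $p\in\Lambda$ possessing a transversal homoclinic point. By the Birkhoff--Smale theorem a single such point already produces a horseshoe, hence infinitely many periodic orbits homoclinically related to $p$, so the homoclinic class of $p$ is \emph{nontrivial}; meanwhile Lyapunov stability will guarantee that this class lies inside $\Lambda$. Thus the whole problem reduces to (i) finding a periodic saddle in $\Lambda$ and (ii) creating one transversal homoclinic intersection for it.

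The first, elementary, ingredient is that Lyapunov stability pushes unstable manifolds into $\Lambda$: for every $x\in\Lambda$ one has $\overline{W^u(x)}\subset\Lambda$. Indeed, if $y\in W^u(x)$ then $X_{-t}(y)$ converges to the orbit of $x$ and so, for large $t$, enters any prescribed neighbourhood $V$ of $\Lambda$; choosing $V$ adapted to a neighbourhood $U$ as in the definition of Lyapunov stability, the forward orbit of $X_{-t}(y)$ remains in $U$, and evaluating it at time $t$ gives $y\in U$. As $U$ is arbitrary and $\Lambda$ is closed, $y\in\Lambda$. Consequently, once a periodic saddle $p\in\Lambda$ is produced, both $\overline{W^u(p)}$ and the homoclinic class it generates are automatically contained in $\Lambda$.

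Next I would secure the periodic saddle. If $\Lambda$ has no singularity, then (the part of a sectional-hyperbolic set lying off the singularities being hyperbolic) $\Lambda$ is itself hyperbolic; by the previous paragraph it contains its unstable manifolds, so it is a hyperbolic attractor, and classical hyperbolic theory supplies at once a periodic saddle and a nontrivial homoclinic class. Thus I may assume $\Lambda$ contains a singularity $\sigma$, necessarily Lorenz-like, with $\overline{W^u(\sigma)}\subset\Lambda$. Reading the existence-of-periodic-orbit results of \cite{bm},\cite{n},\cite{l} in the Lyapunov stable category, I obtain a periodic orbit in $\Lambda$; every such orbit is automatically a saddle, since the splitting $E\oplus F$ restricts to it with $E$ contracting and $F$ sectional expanding of dimension at least two, the flow direction lying inside $F$.

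The heart of the argument, and the step I expect to be genuinely hard, is to force $W^u(p)$ to cross $W^s(p)$ transversally. I would work inside the compact invariant set $K=\overline{W^u(p)}\subset\Lambda$, which is again sectional-hyperbolic, Lyapunov stable, and contains $p$. Fixing an invariant probability measure on $K$ and invoking Poincar\'e recurrence, small unstable disks near the orbit of $p$ return arbitrarily close to it; the \emph{sectional} (rather than merely directional) expansion of $F$ makes the area of two-dimensional subdisks of $W^{uu}(p)$ grow exponentially, so a returning disk cannot stay small in the strong-unstable direction, and a $\lambda$-lemma argument then makes such a fat returning disk cross $W^{s}_{\mathrm{loc}}(p)$ transversally. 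The obstacle is that the returning pieces of $W^u(p)$ may be dragged towards $\sigma$, where the flow slows, the local product structure degenerates, the naive inclination argument fails, and one must also rule out that $W^u(p)$ is swallowed by the stable manifold of $\sigma$. It is precisely here that sectional hyperbolicity is designed to intervene: the area-expansion of $F$ should guarantee that even excursions passing close to $\sigma$ re-emerge with large unstable size and hence still yield honest transversal crossings of $W^s(p)$. Controlling this singular return is, to my mind, the crux of the proof.
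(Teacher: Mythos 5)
Your preparatory steps are fine as far as they go: Lyapunov stability does force $\overline{W^u(p)}\subset\Lambda$, a nonsingular sectional-hyperbolic set is hyperbolic, and any periodic orbit in $\Lambda$ is a saddle. But there is a genuine gap exactly where you place ``the heart of the argument'': you never actually produce the transversal intersection of $W^u(p)$ with $W^s(p)$. The paragraph about recurrence of unstable disks, area growth under sectional expansion, and a $\lambda$-lemma is a programme, not a proof, and you yourself identify the fatal obstruction (returns of $W^u(p)$ passing near the singularity, where the local product structure degenerates and the unstable manifold may be absorbed by $W^s(\sigma)$) without resolving it. Resolving precisely that is the content of the Arroyo--Pujals work \cite{apu} for three-dimensional singular-hyperbolic attractors; it is a long and delicate construction that does not transfer for free to higher-dimensional Lyapunov stable sets. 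A secondary gap: the periodic orbit you want to ``secure'' from \cite{bm}, \cite{n}, \cite{l} is only available there for three-dimensional Lyapunov stable sets or for higher-dimensional \emph{attracting} sets; for a higher-dimensional Lyapunov stable set that is not attracting, the existence of a periodic orbit is part of what the theorem is proving, so this step is not covered by the cited results.

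The paper's proof avoids the geometric construction entirely and follows a different route. Using the SRB-like measures of \cite{ch} it produces an invariant measure for the time-one map supported on $\Lambda$; the $C^1$ Pesin entropy inequality of \cite{cch} together with the sectional expansion of $F$ forces this measure to have positive entropy, hence $\Lambda$ carries an ergodic measure of positive entropy by the variational principle; such a measure cannot be supported on a periodic orbit or singularity, and the star-flow result of \cite{sgw} then says its support meets a nontrivial homoclinic class, which Lyapunov stability locks inside $\Lambda$. To complete your route you would essentially have to redo \cite{apu} in this generality; otherwise the entropy argument is the way through.
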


The proof relies on recent results concering
hyperbolic ergodic measures for flows \cite{cch}, \cite{ch}, \cite{sgw}.

\section{Proof}

\noindent
We start with some terminology from \cite{ch}.
As it is well-known, the space of probability measures of $M$ endowed
with the weak* topology is metrizable, we denote by $d_*$ the corresponding metric.
We say that a measure $\mu$ is {\em supported on $H\subset M$}
if its support $supp(\mu)$ is contained in $H$.
We denote by $\delta_y$ the Dirac measure supported on $y$.

If $f: M\to M$ is a continuous map,
we say that a Borel probability measure $\mu$ is an {\em invariant measure}
if $\mu(f^{-1}(A))=\mu(A)$ for every Borelian $A$.
For any point $x\in M$ we denote by $p\omega(x)$ the set of all the Borel probabilities measures that are the limits of
the convergent subsequences of the sequence
$$
\frac{1}{n}\displaystyle\sum_{i=0}^{n-1}\delta_{f^i(x)}.
$$
An invariant measure $\mu$ is {\em SRB-like for $f$}
if for all  $\epsilon>0$ the set
of points $x\in M$ such that $d_*(p\omega(x),\mu)<\epsilon$
has positive Lebesgue measure.

Applying Theorem 3.1 in \cite{ch} we obtain the following existence
result.

\begin{lemma}
\label{l.existe}
Every Lyapunov stable set of a continuous map $f$ supports a SBR-like measure.
\end{lemma}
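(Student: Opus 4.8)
The plan is to derive the statement from Theorem 3.1 of \cite{ch}, which I will use in the following form: the set $\SO$ of SRB-like measures of $f$ is nonempty and weak$^*$ compact, and $p\omega(x)\subset\SO$ for Lebesgue almost every $x\in M$. The idea is to exploit Lyapunov stability to confine a set of forward orbits of positive Lebesgue measure to an arbitrarily small neighbourhood of $\Lambda$, thereby producing SRB-like measures supported arbitrarily close to $\Lambda$, and then to take a weak$^*$ limit.

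First I would fix an open neighbourhood $U$ of $\Lambda$. By Lyapunov stability there is a neighbourhood $V\subset U$ of $\Lambda$ with $f^n(V)\subset U$ for every $n\geq0$. Being a nonempty open set, $V$ has positive Lebesgue measure, so the full-measure set furnished by Theorem 3.1 meets $V$ in a set of positive measure; pick $x$ in this intersection. On the one hand $p\omega(x)\subset\SO$. On the other hand, since $f^n(x)\in U$ for all $n\geq0$, each empirical average $\frac1n\sum_{i=0}^{n-1}\delta_{f^i(x)}$ is supported on $\overline U$, and hence so is every measure in $p\omega(x)$. As $p\omega(x)\neq\emptyset$, choosing any $\nu\in p\omega(x)$ yields an SRB-like measure $\nu\in\SO$ with $\su(\nu)\subset\overline U$.

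Now I would apply this to a nested sequence of open neighbourhoods $U_j$ of $\Lambda$ with $\bigcap_j\overline{U_j}=\Lambda$ (available because $\Lambda$ is compact), obtaining SRB-like measures $\mu_j\in\SO$ with $\su(\mu_j)\subset\overline{U_j}$. By the weak$^*$ compactness of $\SO$, some subsequence of $(\mu_j)$ converges to a measure $\mu\in\SO$, which is therefore again SRB-like. For every closed neighbourhood $F$ of $\Lambda$ one has $\overline{U_j}\subset F$, and hence $\mu_j(F)=1$, for all large $j$; the Portmanteau theorem then gives $\mu(F)=1$, and letting $F$ shrink to the compact set $\Lambda$ yields $\mu(\Lambda)=1$, that is $\su(\mu)\subset\Lambda$. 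Thus $\mu$ is the desired SRB-like measure supported on $\Lambda$.

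The crux is this final limit: it is essential that the set $\SO$ of SRB-like measures be weak$^*$ compact, so that the limit of the $\mu_j$ remains SRB-like, while the supports $\overline{U_j}$ collapse onto $\Lambda$. The two uses of Lyapunov stability, namely trapping the orbits so that $p\omega(x)$ lives on $\overline U$ and guaranteeing that the trapping region $V$ has positive Lebesgue measure so that the almost-everywhere clause of Theorem 3.1 is not vacuous on it, are exactly what allow Theorem 3.1 to be localized to the set $\Lambda$.
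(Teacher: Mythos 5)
Your proof is correct and follows essentially the same route as the paper's: both localize Theorem 3.1 of \cite{ch} to a nested sequence of neighbourhoods of $\Lambda$ produced by Lyapunov stability and then extract a weak$^*$ limit that remains SRB-like and is supported on $\Lambda$. The only difference is cosmetic: the paper applies the theorem to the restricted maps $f|_{U_i}$ and notes that SRB-like measures for $f|_{U_i}$ are SRB-like for $f$, whereas you invoke the almost-everywhere inclusion $p\omega(x)\subset\SO$ directly on $M$ and use the trapping region to control supports, spelling out the Portmanteau step that the paper leaves implicit.
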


\begin{proof}
Let $\Lambda$ be a Lyapunov stable set of $f$.
Since $\Lambda$ is Lyapunov stable, we can take
a nested sequence $U_i$ of compact neighbourhoods of $\Lambda$
such that $f(U_i)\subset U_i$ and $\bigcap_iU_i=\Lambda$.
By the aforementioned result in \cite{ch} there is a sequence of SRB-like measures
$\mu_i$  for $f|_{U_i}$, $\forall i\in \mathbb{N}$.
By definition, such measures are also SRB-like measures for $f$.
Again by \cite{ch}, any accumulation measure of $\mu_i$
is SBR-like and supported on $\Lambda$. This ends the proof.
\end{proof}

Next we recall some facts about Lyapunov exponents.
Assume that $f$ is a diffeomorphism and
let $\mu$ be an invariant measure.
By Oseledets's Theorem, for every continuous invariant subbundle
$F$ of $T_\Lambda M$ there exits a full measure set $R$
(called regular points) and, for all $x\in R$, a positive integer $k(x)$,
real numbers $\chi_1(x)<\cdots<\chi_{k(x)}(x)$ and a splitting
$F_x=E^1_x\oplus \cdots \oplus E^{k(x)}_x$,
depending measurably on $x\in R$,
such that
$$
\displaystyle\lim_{n\to\pm\infty}\frac{1}{n}\log\|Df^n(x)v^i\|=\chi_i(x),
\quad\quad\forall v^i\in E^i_x\setminus\{0\}, 1\leq i\leq k(x).
$$
The numbers $\chi_i(x)$ (which depends measurably on $x\in R$)
are the so-called {\em Lyapunov exponents} of $\mu$ along $F$.

The following is a corollary of the main result in \cite{cch}.

\begin{lemma}
\label{l1}
Let $\Lambda$ be a Lyapunov stable set of a flow $X$.
If $\Lambda$ has a dominated splitting $T_{\Lambda}M=E\oplus F$
with respect to the tangent flow, and $\mu$ is a SRB-like measure
of the time-$1$ map $X_1$, then
$$
h_\mu(X_1)\geq\displaystyle\int\displaystyle\sum_{i=1}^{\dim(F)}\chi_id\mu,
$$
where $\sum_{i=1}^{\dim(F)}\chi_i$
denotes the sum of the Lyapunov exponents along $F$.
\end{lemma}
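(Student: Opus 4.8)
The plan is to deduce the inequality from the $C^1$ Pesin‑type entropy estimate for SRB‑like measures established in \cite{cch}, after recasting the flow datum as a datum for the single diffeomorphism $X_1$. First I would observe that $X_1$ is a $C^1$ diffeomorphism of $M$ and that the splitting $T_\Lambda M=E\oplus F$, being dominated for the tangent flow, is in particular dominated for the linear cocycle $DX_1$ over $X_1$: evaluating the domination inequality at $t=1$ and composing along the orbit yields the required uniform estimate for the powers of $DX_1$. Since $\Lambda$ is Lyapunov stable, I would next fix, exactly as in the proof of Lemma~\ref{l.existe}, a compact neighbourhood $U$ of $\Lambda$ with $X_1(U)\subset U$ and small enough that the dominated splitting extends continuously to $U$; as $\mu$ is supported on $\Lambda\subset U$, replacing $X_1$ by $X_1|_U$ alters neither $h_\mu(X_1)$ nor the Lyapunov exponents of $\mu$ along $F$.

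With these reductions the hypotheses of \cite{cch} are in force, and its main result provides the lower Ruelle--Pesin bound for the SRB‑like measure $\mu$ through the dominated bundle $F$, namely
$$
h_\mu(X_1)\geq\int\log\bigl|\det\bigl(DX_1|_F\bigr)\bigr|\,d\mu.
$$
To finish I would identify the right‑hand side with the sum of the Lyapunov exponents along $F$. By the cocycle relation $\log|\det(DX_1^{\,n}|_F)|(x)=\sum_{k=0}^{n-1}\log|\det(DX_1|_F)|(X_1^k(x))$ together with Oseledets' theorem, the time average of $\log|\det(DX_1|_F)|$ equals $\sum_{i=1}^{\dim(F)}\chi_i$ at $\mu$‑almost every point; integrating and using the $X_1$‑invariance of $\mu$ gives $\int\log|\det(DX_1|_F)|\,d\mu=\int\sum_{i=1}^{\dim(F)}\chi_i\,d\mu$, which is precisely the asserted estimate.

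The main obstacle I anticipate is essentially one of bookkeeping between the continuous‑time and discrete‑time pictures: \cite{cch} is phrased for a diffeomorphism equipped with a dominated splitting, so the genuine content of the argument is to verify that $X_1|_U$ truly satisfies those hypotheses, that the splitting does extend to the neighbourhood, and that the notion of SRB‑like measure used there coincides with the one employed here for $X_1$. A related point is to confirm that the lower bound supplied by \cite{cch} is the one expressed through the dominated bundle $F$---equivalently through $\log|\det(DX_1|_F)|$---since this is exactly the form the conclusion requires under the mere domination hypothesis assumed here (with no sign information on the exponents along $E$). Once this matching is secured, the remaining computation is the routine Oseledets identity above.
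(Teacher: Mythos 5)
Your proposal is correct and follows exactly the route the paper intends: the paper gives no proof of Lemma~\ref{l1} at all, merely declaring it ``a corollary of the main result in \cite{cch}'', and your argument---passing to the time-$1$ diffeomorphism, transferring the dominated splitting, invoking the Ruelle--Pesin lower bound of \cite{cch} for SRB-like measures, and converting $\int\log|\det(DX_1|_F)|\,d\mu$ into the integrated sum of Lyapunov exponents via Oseledets and Birkhoff---is precisely the derivation being left implicit. The technical caveats you flag (extending the splitting off $\Lambda$, matching the two notions of SRB-like measure) are real but are glossed over by the paper as well, so your write-up is if anything more careful than the original.
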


The next lemma proves the positivity of the integral of the
sum of the Lyapunov exponents along the central subbundle of any sectional-hyperbolic set.

\begin{lemma}
\label{l2}
Let $\Lambda$ be a compact invariant set of a flow $X$.
If $\Lambda$ has a sectional-hyperbolic splitting
$T_{\Lambda}M=E\oplus F$, and $\mu$ is an invariant measure
of the time-$1$ map $X_1$
supported in $\Lambda$, then
$$
\displaystyle\int\displaystyle\sum_{i=1}^{\dim(F)}\chi_id\mu>0.
$$
\end{lemma}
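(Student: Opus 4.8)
The plan is to recognize the integrand $\sum_{i=1}^{\dim(F)}\chi_i$ as the exponential growth rate of the Jacobian of the tangent cocycle restricted to $F$, and then to bound this rate from below pointwise using the sectional-expanding hypothesis. Write $d=\dim(F)\geq 2$. At a regular point $x$ (in the Oseledets full-measure set $R$ for the diffeomorphism $X_1$ and the subbundle $F$), I would list the Lyapunov exponents along $F$ with multiplicity and in decreasing order as $a_1(x)\geq\cdots\geq a_d(x)$; thus $\sum_{i=1}^{d}\chi_i(x)=\sum_{i=1}^{d}a_i(x)$, and this number equals $\lim_{n\to\infty}\frac1n\log|\det(DX_n(x)|_{F_x})|$. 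Since $\log\|DX_{\pm1}\|$ is bounded on the compact manifold $M$, all the $a_i$ are $\mu$-integrable, so the integral in the statement is well defined.

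The crux is the pointwise estimate $a_{d-1}(x)+a_d(x)\geq\lambda$ for a.e.\ $x$. To get it I would look at the singular values $\sigma_1(n)\geq\cdots\geq\sigma_d(n)$ of the linear map $DX_n(x)|_{F_x}$. By Oseledets $\frac1n\log\sigma_j(n)\to a_j(x)$ for each $j$, so in particular $\frac1n\log\big(\sigma_{d-1}(n)\sigma_d(n)\big)\to a_{d-1}(x)+a_d(x)$. On the other hand, a minimax (exterior-algebra) fact identifies the product of the two smallest singular values with the smallest possible area distortion,
$$
\sigma_{d-1}(n)\sigma_d(n)=\min_{L}\,\big|\det\big(DX_n(x)|_{L}\big)\big|,
$$
the minimum being over all two-dimensional subspaces $L\subset F_x$ (the minimizing $L$ is spanned by the right singular vectors for $\sigma_{d-1},\sigma_d$). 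Because the sectional-expanding condition holds for \emph{every} two-plane, it holds in particular for the minimizing plane at each $n$, whence $\sigma_{d-1}(n)\sigma_d(n)\geq Ke^{\lambda n}$. Taking logarithms, dividing by $n$ and letting $n\to\infty$ yields $a_{d-1}(x)+a_d(x)\geq\lambda$. I expect this matching to be the main subtlety: the sectional hypothesis is uniform over all planes, whereas the Oseledets directions move with $n$, and the point is precisely that this uniformity lets us evaluate the hypothesis on the $n$-dependent minimizing plane.

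With the pointwise estimate in hand the conclusion is immediate. From $a_{d-1}(x)\geq a_d(x)$ and $a_{d-1}(x)+a_d(x)\geq\lambda>0$ we get $a_{d-1}(x)\geq\lambda/2>0$, so $a_1(x),\dots,a_{d-1}(x)$ are all positive; splitting the sum as $\sum_{i=1}^{d-2}a_i(x)+\big(a_{d-1}(x)+a_d(x)\big)$ gives $\sum_{i=1}^{d}a_i(x)\geq\lambda$ at a.e.\ $x$ (the first group being empty when $d=2$). Integrating against $\mu$ then gives $\int\sum_{i=1}^{\dim(F)}\chi_i\,d\mu\geq\lambda>0$, as required. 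Note that only the integer-time estimates $|\det(DX_n|_{L})|\geq Ke^{\lambda n}$ are used, so no comparison between the discrete cocycle of $X_1$ and the continuous flow is needed.
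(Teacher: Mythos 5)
Your proof is correct and follows essentially the same route as the paper's: both rest on identifying $\sum_{i=1}^{\dim(F)}\chi_i$ with the exponential growth rate of $|\det DX_n|_{F}|$ and deducing positivity from the sectional-expanding hypothesis. The paper dismisses the remaining step as following ``easily''; your singular-value/minimax argument (showing the product of the two smallest singular values is the minimal area distortion over $2$-planes, hence $a_{d-1}+a_d\geq\lambda$ and the full sum is at least $\lambda$) is exactly the detail needed to justify that step when $\dim(F)>2$.
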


\begin{proof}
Since
$$
\lim_{n\to\infty}\frac{1}{n}\log|\det DX_n|_F|=\displaystyle\sum_{i=1}^{\dim(F)}\chi_i,
$$
the result follows easily from the sectional expansivity of $F$.
\end{proof}

From this we obtain the following corollary.

\begin{corollary}
\label{coo1}
Every sectional-hyperbolic Lyapunov stable set of
a flow has positive topological entropy.
\end{corollary}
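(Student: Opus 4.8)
The plan is to establish the entropy bound for the time-one map $X_1$ and then transfer its positivity to the flow, by chaining together the three preceding lemmas. Let $\Lambda$ be a sectional-hyperbolic Lyapunov stable set of a flow $X$, with sectional-hyperbolic splitting $T_\Lambda M = E \oplus F$. A sectional-hyperbolic splitting is in particular a dominated splitting with respect to the tangent flow, so Lemma \ref{l1} will be applicable once we produce a suitable measure, and the sectional-expanding hypothesis needed for Lemma \ref{l2} is built into the definition.

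First I would pass to the time-one map $f = X_1$. Since $\Lambda$ is invariant and Lyapunov stable for the flow, it is invariant and Lyapunov stable for $f$: given a neighbourhood $U$ of $\Lambda$, the neighbourhood $V \subset U$ provided by flow-stability satisfies $f^n(V) = X_n(V) \subset U$ for all $n \geq 0$. Hence Lemma \ref{l.existe} applies to $f$ and yields an SRB-like measure $\mu$ supported on $\Lambda$. By the definition recalled in the text, $\mu$ is in particular an $f$-invariant Borel probability measure, so it \emph{simultaneously} meets the hypotheses of Lemma \ref{l2} (an invariant measure of $X_1$ supported in $\Lambda$) and of Lemma \ref{l1} (an SRB-like measure of $X_1$ on a Lyapunov stable set with a dominated splitting).

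Combining the two estimates on this single measure, Lemma \ref{l1} gives $h_\mu(X_1) \geq \int \sum_{i=1}^{\dim(F)} \chi_i \, d\mu$, while Lemma \ref{l2} gives $\int \sum_{i=1}^{\dim(F)} \chi_i \, d\mu > 0$; therefore $h_\mu(X_1) > 0$. By the variational principle the topological entropy of $X_1$ dominates the metric entropy of any invariant measure, so $h_{top}(X_1) \geq h_\mu(X_1) > 0$. Since the topological entropy of a flow is by definition that of its time-one map, $X$ has positive topological entropy, as claimed.

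As for the main obstacle, the genuinely hard analytic content — the entropy lower bound and the positivity of the summed central exponents — is already packaged in Lemmas \ref{l1} and \ref{l2}. The only real care needed here is bookkeeping: verifying that \emph{one and the same} measure $\mu$ fulfills all the hypotheses at once, and that Lyapunov stability survives passage to the time-one map, together with the invocation of the variational principle to convert the metric-entropy bound into a topological-entropy bound for the flow.
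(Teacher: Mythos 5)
Your proof is correct and follows essentially the same route as the paper: obtain an SRB-like measure on $\Lambda$ via Lemma \ref{l.existe}, combine Lemmas \ref{l1} and \ref{l2} to get $h_\mu(X_1)>0$, and conclude by the variational principle for $X_1$. The only difference is that you spell out the (routine but worthwhile) check that Lyapunov stability for the flow passes to the time-one map, which the paper leaves implicit.
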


\begin{proof}
Let $\Lambda$ be a sectional-hyperbolic Lyapunov stable set.
By Lemma \ref{l.existe} we can take a SRB-like measure $\mu$ supported on
$\Lambda$
for the restricted time-$1$ map $f=X_1|_{\Lambda}$.
Combining lemmas \ref{l1} and \ref{l2} we obtain $h_\mu(X_1)>0$.
Thus the result follows applying the variational principle to $X_1$.
\end{proof}

The last ingredient is the following lemma
whose proof is contained in that of Theorem 5.6 in \cite{sgw}.
Given a flow $X$ and a compact invariant set $\Lambda$, we say that
{\em $X$ is a star flow on $\Lambda$} if there exists a neighbourhood
$U$ of $\Lambda$, and $\mathcal{U}$ of $X$ in the $C^1$ topology
such that every periodic orbit or singularity contained in $U$ of every flow $Y$
in $\mathcal{U}$ is hyperbolic.

\begin{lemma}
\label{l3}
Let $\Lambda$ be a compact invariant set of a flow $X$.
If $X$ is a star on $\Lambda$, then
the support of any ergodic measure
supported on $\Lambda$ but not on a periodic orbit or singularity
intersects a nontrivial homoclinic class.
\end{lemma}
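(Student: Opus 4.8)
The plan is to realize $\mu$ as a \emph{hyperbolic} ergodic measure and then close it up into a hyperbolic periodic saddle whose homoclinic class meets $\operatorname{supp}(\mu)$. Write $\mu$ for the ergodic measure in the statement and recall that, by hypothesis, $\operatorname{supp}(\mu)$ is neither a singularity nor a single periodic orbit. Because $\mu$ is not supported on a singularity, $\mu$-almost every point is regular for the linear Poincar\'e flow, so Oseledets' theorem applies to the normal bundle along the flow direction and produces Lyapunov exponents transverse to that direction.

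First I would show that $\mu$ is hyperbolic, i.e.\ that all these transverse exponents are nonzero and, in fact, split into a definitely contracting part and a definitely expanding part. This is where the star hypothesis enters: on a star flow the periodic orbits (and more generally the orbit segments detected by Liao's closing and shadowing lemmas) carry uniform hyperbolic estimates for the linear Poincar\'e flow, with constants independent of the orbit. Combining these uniform estimates with an ergodic-closing argument applied to $\mu$ forces the transverse exponents of $\mu$ to be bounded away from $0$; this is exactly the mechanism in the proof of Theorem 5.6 of \cite{sgw}.

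Second, with $\mu$ ergodic, hyperbolic, and not concentrated on a critical element, I would invoke a Katok-type closing lemma (in the flow/Liao version used in \cite{sgw}) to produce hyperbolic periodic orbits $\gamma_n$ whose invariant measures converge in the weak$^*$ topology to $\mu$ and whose orbits accumulate, in the Hausdorff metric, on $\operatorname{supp}(\mu)$. The star property guarantees that each $\gamma_n$ is hyperbolic, and since $\mu$ has exponents of both signs the $\gamma_n$ are saddles with local stable and unstable manifolds of uniform size. Recurrence of $\mu$-generic points together with the inclination lemma then yields transversal homoclinic points for $\gamma_n$, so the homoclinic class $H(\gamma_n)$ of $\gamma_n$ is nontrivial; choosing points of $\gamma_n$ approaching $\operatorname{supp}(\mu)$ and passing to the limit shows that one such homoclinic class meets $\operatorname{supp}(\mu)$.

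The main obstacle is the presence of singularities in $\Lambda$. Although $\mu$ itself ignores the singular set, the orbit segments used in the closing lemma may pass arbitrarily close to singularities, where the linear Poincar\'e flow degenerates and the uniform hyperbolic estimates threaten to break down. The way around this, following \cite{sgw}, is to work instead with the rescaled (extended) linear Poincar\'e flow and to use Liao's sifting lemma to select orbit segments that remain in the hyperbolic regime, so that the closing and homoclinic-connection constructions survive despite the singularities.
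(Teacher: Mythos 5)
Your proposal is correct and takes essentially the same route as the paper, which offers no independent argument for this lemma but simply states that its proof is contained in that of Theorem 5.6 of \cite{sgw}. The mechanism you describe --- hyperbolicity of $\mu$ from the star property via the rescaled linear Poincar\'e flow and Liao's estimates, a Liao/Katok-type closing to produce uniformly hyperbolic saddles shadowing $\mu$-generic orbits, and uniform-size invariant manifolds yielding a nontrivial homoclinic class that accumulates on (hence, being closed, meets) $\operatorname{supp}(\mu)$ --- is exactly the argument being cited.
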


Now we can prove our main result.

\begin{proof}[Proof of Theorem \ref{ThA}]
Let $\Lambda$ be a sectional-hyperbolic Lyapunov stable set
of a flow $X$.
It is well-known \cite{ap} that $X$ is a star flow on $\Lambda$.
Then, since $\Lambda$ is Lyapunov stable, to prove that there is a nontrivial homoclinic class in $\Lambda$,
it suffices to find by Lemma \ref{l3} an ergodic measure
supported on $\Lambda$ but not on a periodic orbit or singularity.
Since the entropy is positive
by Corollary \ref{coo1}, such a measure can be found by the variational principle for flows. This completes the proof.
\end{proof}

\end{document}